\newcommand{\R}{\mathbb R}
\newcommand{\N}{\mathbb N}
\newtheorem{theorem}{Theorem} [section]
\newtheorem{lemma}{Lemma} [section]
\newtheorem{corollary}{Corollary} [section]
\newtheorem{remark}{Remark}[section]
\begin{document}
\title [Generalization of Levitin and Parnovski inequality]{A generalization of a Levitin and Parnovski universal inequality for eigenvalues}
\author{ Sa\"{\i}d Ilias and Ola Makhoul}
\date{june 2009}
\address{S. Ilias, O. Makhoul: Universit\'e Fran\c{c}ois rabelais de Tours, Laboratoire de Math\'ematiques
et Physique Th\'eorique, UMR-CNRS 6083, Parc de Grandmont, 37200
Tours, France} \email{ilias@univ-tours.fr, ola.makhoul@lmpt.univ-tours.fr}

\keywords{eigenvalues, Hodge de Rham Laplacian, Universal inequalities, Submanifolds}
\subjclass[2000]{35P15;58J50;58C40;58A10}

\begin{abstract} In this paper, we derive "universal" inequalities for the sums of eigenvalues of the Hodge de Rham Laplacian on Euclidean closed Submanifolds and of eigenvalues of the Kohn Laplacian on the Heisenberg group.  
These inequalities generalize the Levitin-Parnovski inequality obtained for the sums of eigenvalues of the Dirichlet Laplacian of a bounded Euclidean domain.
\end{abstract}

\maketitle

\section{Introduction}
Among inverse spectral problems, let us mention the following question (see for instance \cite{ColVerd})\\

\textit{"What kind of increasing sequences of non negative numbers can be the spectrum of the Laplacian of a compact Riemannian manifold (respectively of the Dirichlet Laplacian on a domain of a fixed Euclidean space) ?"}\\

This question can be asked in other more general contexts (for other operators and for Dirichlet or Neumann boundary conditions if the manifold has boundary, and for domains of a general Riemannian manifold instead of an Euclidean space).
Such sequences, which we will call spectral, admit some restrictions given by the asymptotics of Weyl and those of Minakshisundaram-Pleijel. Thereby, concerning those sequences, a natural question, less difficult than the first one, arises \\

\textit{"Is there any restrictions on these spectral sequences, which are independent of the manifold (respectively the domain) ? "}\\

Such restrictions will be called "universal".
The first result in this direction is the universal inequality of Payne, Polya and Weinberger \cite{PPW} obtained in 1955. In fact, they proved that the eigenvalues  $\{\lambda_i\}_{i=1}^{\infty}$ of the Dirichlet boundary problem for the Laplacian on a bounded domain $\Omega \subset \R^n$, must satisfy for each $k$,
\begin{equation} \label{ppw}
\displaystyle{ \lambda_{k+1}-\lambda_{k} \le \frac{4}{nk} \sum_{i=1}^{k} \lambda_{i}}
\end{equation}
(which we call henceforth the PPW inequality).\\
This result was improved in 1980 by Hile and Protter \cite{HileProt} (henceforth HP) who showed that, for $k=1,2,\ldots$
\begin{equation}
\displaystyle \frac{nk}{4} \leq \sum_{i=1}^k
\frac{\lambda_i}{\lambda_{k+1}-\lambda_i}.
\end{equation}
In 1991, H.C.Yang (see \cite{Yang.HC} and more recently
\cite{ChengYang1}) proved
\begin{equation}\label{1}
\displaystyle \sum_{i=1}^k (\lambda_{k+1}-\lambda_i)^2 \leq
\frac{4}{n} \sum_{i=1}^k \lambda_i(\lambda_{k+1}-\lambda_i),
\end{equation}
which is, until now, the best improvement of the PPW inequality (see for instance \cite{Ashb1} for a comparison of all these three inequalities).\\

Apart from this class of inequalities (PPW, HP and Yang) which was intensively studied, there exists another class, much less known, discovered by Levitin and Parnovski (see Example 4.2 and identity (4.14) of \cite{LevPar}). Indeed, they proved for the eigenvalues of the Dirichlet Laplacian of any bounded domain of $\R^{n}$ and for any $k$,
\begin{equation}\label{Levi-Parn}
\sum_{i=1}^{n} \lambda_{k+i} \le (4+n) \lambda_{k}
\end{equation}
(these inequalities, indexed by $k$, will be referred to henceforth as Levitin and Parnovski inequalities).\\
These inequalities generalize a previous inequality obtained for $k=1$ by PPW
\cite{PPW} in dimension $n=2$ and by Ashbaugh (cf section 3.2 of \cite{Ashb2}) in all dimensions.\\
\indent All these universal inequalities show that one can not prescribe arbitrarily a finite part of the spectrum of the Dirichlet Laplacian on a bounded domain of an Euclidean space. This contrasts completely with the situation of the Laplace operator on a compact manifold (or the Neumann Laplacian on a bounded Euclidean domain), for which Colin de Verdi\`ere \cite{ColVerd} showed that it is possible to prescribe any finite part of the spectrum. More precisely, Colin de Verdi\`ere proved that, if $s_{N}=\{ \lambda_{1}=0 <\lambda_{2}\le \dots \le \lambda_{N}\}$ is a finite set of real numbers and if $M$ is a compact manifold without boundary of dimension $\ge 3$, then there exists a Riemannian metric on $M$ having $s_{N}$ as the beginning of the spectrum of its Laplacian. This was generalized by Guerini \cite {Guerini} to the Hodge de Rham Laplacian acting on differential forms for compact manifolds without boundary and for bounded Euclidean domains with the relative or absolute boundary conditions. As a consequence of these prescription results of a part of the spectrum, contrary to the situation of the Dirichlet Laplacian acting on functions on Euclidean bounded domains, one cannot expect a universal inequality for the Laplacian  and more generally for the Hodge de Rham Laplacian acting on forms, on a compact Riemannian manifold.
However, a generalization of the PPW universal inequality holds for some special manifolds. In fact, in 1975, Cheng \cite{Cheng} showed that the PPW inequality (\ref{ppw}) holds for domains of minimal hypersurfaces of $\R^{n+1}$ (note that his proof works also for codimension $\ge 1$). In the same spirit, Yang and Yau \cite{YangYau} obtained a generalization of the PPW inequality for the eigenvalues of the Laplacian of any compact minimal Submanifold of a Sphere. Note that these two results indicate that, a role must probably be played by the extrinsic geometry of the Submanifolds in an eventual generalization of the PPW inequality.  Other generalizations were obtained (see for instance \cite{Anghel}, \cite{Ashb1}, \cite{AshbHer1}, \cite{AshbHer3}, \cite{Cheng}, \cite{Col}, \cite{Harl1}, \cite{Harl2}, \cite{HarlMichel2}, \cite{HarlMichel1}, \cite{harlStub2}, \cite{HarlStub}, \cite{HileProt},\cite{IlMa}, \cite{Lee}, \cite{LeungPF2}, \cite{LiP}, \cite{Soufi.Harl.Ilias} and \cite{Yang.HC}), among them we mention the results of Lee \cite{Lee} and Anghel \cite{Anghel} which constitute a first tentative to a generalization of the PPW inequality to the eigenvalues of the Hodge de Rham Laplacian on an Euclidean compact Submanifold. Unfortunately, these generalized inequalities depend on the intrinsic geometry of the Submanifold. Nevertheless, the results of Colin de Verdi\`ere, Guerini, Cheng and Yang and Yau, suggest in the case of Euclidean Submanifolds the following question \\

\textit{"Can one find Universal inequalities of PPW, HP, Yang or Levitin and Parnovski type for the eigenvalues of the Dirichlet Laplacian on a bounded domain of an Euclidean Submanifold or for the eigenvalues of the Hodge de Rham Laplacian on an Euclidean compact Submanifold,  which depends only on the extrinsic geometry of the Submanifold (i.e its second fundamental form or its mean curvature) ? "}\\

\indent Using an algebraic commutation inequality of Harrell and Stubbe \cite{HarlStub}, we gave in \cite{IlMa}(see also the references therein for partial results) a complete answer to the first part of the question, concerning PPW, HP and Yang type inequalities. In the present article we will focus on the second part of the question. Using an algebraic identity obtained by Levitin and Parnovski and by a method completely different to that we used in \cite{IlMa}, we will give a positive answer to the second part of the question which extends the Levitin and Parnovski inequalities (\ref{Levi-Parn}) to the eigenvalues of the Hodge de Rham Laplacian of a compact Euclidean Submanifold . We observe that our proof works also for the eigenvalues of the Dirichlet Laplacian on bounded domains of Euclidean Submanifolds.\\
We must note that some partial generalizations of the Levitin and Parnovski inequality was obtained recently by Chen and Cheng \cite{ChengChen} and by Sun, Cheng and Yang \cite{ChengYang3}. But, it turns out that all these generalizations are particular cases of our results. Indeed, on one hand, a direct consequence of our work (apply Corollary \ref{cor theorem 1} with $q=0$) is that, for any bounded domain $\Omega$ of an $m$-dimensional isometrically immersed Riemannian manifold $M$ in an Euclidean space and for any $k$, we have 
 \begin{equation}\label{LP1}
\sum_{i=1}^{m} \lambda_{k+i} \le (4+m) \lambda_{k}+\left\|H\right\|^{2}_{\infty,\Omega}
\end{equation}   
where $\left\{\lambda_{j}\right\}_{j=1}^{\infty}$ are the eigenvalues of the Dirichlet Laplacian of $\Omega$, $H$ is the mean curvature vector of the immersion of $M$ (i.e the trace of its second fundamental form) and $\left\|H\right\|^{2}_{\infty,\Omega}=\displaystyle{\sup_{\Omega}|H|^{2}}$.
When we take $k=1$ in this inequality (\ref{LP1}), we obtain as a direct consequence the generalization obtained by Chen and Cheng (see Theorem 1.1 of \cite{ChengChen}). On the other hand, if we combine inequality (\ref{LP1}) with the standard embeddings of the compact rank one symmetric spaces in an Euclidean space, we easily derive a similar inequalities for Submanifolds of a Sphere or a Projective space. Let us denote by $\overline{M}$ the Sphere $\mathbb{S}^{n}$, the real projective space $\mathbb{R}P^{n}$, the complex projective space $\mathbb{C}P^{n}$ or the quaternionic projective space $\mathbb{Q}P^{n}$ endowed with their respective standard metrics and let $M$ be an $m$-dimensional Riemannian manifold isometrically immersed in $\overline{M}$. We prove (see Corollary \ref{corRSS}) that for any bounded bounded domain of $M$ and for any $k \ge 1$,
 \begin{equation}\label{LP2}
\displaystyle \sum_{i=1}^m \lambda_{k+i} \le (4+m) \lambda_{k}+\big(\left\| H \right\|^{2}_{\infty,\Omega}+d(m)\big)
\end{equation}
 where, $H$ is the mean curvature of $M$ in $\overline{M}$ and $d(m)$ is the constant given by 
\begin{equation*}
d(m)=
    \begin{cases}
    m^{2},     &\text{if $\overline{M}=\mathbb{S}^{n}$}\\
    2m(m+1),   &\text{if $\overline{M}= \mathbb{R}P^{n}$}\\
    2m(m+2),   &\text{if $\overline{M}= \mathbb{C}P^{n}$}\\
    2m(m+4),    &\text{if $\overline{M}= \mathbb{Q}P^{n}$}.\\

    \end{cases}
\end{equation*}
If we apply inequality (\ref{LP2}) with $k=1$ to domains of $\mathbb{S}^{n}$ or $\mathbb{C}P^{n}$ (respectively for complex Submanifolds of $\mathbb{C}P^{n}$ which are in particular minimal), then we obtain Theorem 1.1 and Theorem 1.3 (respectively Theorem 1.2) of Sun, Cheng and Yang \cite{ChengYang3}. 
\indent Another consequence of our work is an extension to the eigenvalues of higher order of the Reilly inequality (respectively the Asada inequality) concerning the first eigenvalue of the Laplacian (respectively the Hodge de Rham Laplacian) of a compact Riemannian manifold isometrically immersed in an Euclidean space. Indeed, for any $m$-dimensional compact Riemannian manifold immersed in an Euclidean space, Reilly \cite{Reilly} proved the following inequality between the first positive eigenvalue of its Laplacian and the mean curvature of its immersion
 \begin{equation}\label{rel}
 \displaystyle \lambda_{2}\leq \frac{1}{m\,{\rm Vol}(M)}\int_{M} |H|^2 dV_M
\end{equation}
where $dV_M$ and ${\rm Vol}(M)$ are respectively the Riemannian volume element and the volume of $M$.
Asada \cite{Asada} obtains an extension of this inequality to the first positive eigenvalue of the Hodge de Rham Laplacian acting on $p$-forms
  \begin{equation}\label{as}
 \lambda_1^{(p)}(M) \leq \frac{p}{m(m-1) {\rm Vol}(M)} \int _M
\Big[(m-p)|H|^2+(p-1)|h|^2\Big] dV_M.
\end{equation}
where $h$ denotes the second fundamental form of the immersion of $M$. To be more precise, we note that, Asada proves more. In fact, he proves this inequality for the first positive eigenvalue of the Hodge de Rham Laplacian restricted to the closed $p$-forms.\\
Using our generalizations of the Levitin and Parnovski universal inequalities, one can easily extends the Reilly and the Asada inequalities to all the eigenvalues of the Laplacian and the Hodge de Rham Laplacian of Euclidean Submanifolds. We derive (see Corollary \ref{cor Reilly general}) in particular the surprising generalization of the Reilly inequality
\begin{equation*}
 \displaystyle \sum_{k=1}^m \lambda_{k+1}\leq \frac{1}{Vol(M)}\int_{M} |H|^2 dV_M.
\end{equation*}
\indent We limit ourselves to the case of the Hodge de Rham Laplacian, but all our arguments work with minor modifications in the setting of general Laplace operators on Riemannian fiber bundles. \\
\indent Another different situation which is not Riemannian involving an operator which is not elliptic, is that of the Kohn Laplacian on the Heisenberg group. In the second section, we derive a Levitin and Parnovski inequality in this case. 

\section{Generalization of the Levitin-Parnovski inequality to the Hodge de Rham Laplacian}

Let $(M,g)$ be an $m$-dimensional compact Riemannian manifold. We denote by $\bigwedge^{p}(M)$ for $p \in \left\{0, \dots ,m \right\}$, and by $\Gamma(TM)$ respectively, the space of smooth differential $p$-forms and the space of smooth vector-fields of $M$.\\
For any two $p$-forms $\alpha$ and $\beta$, we let $\alpha_{i_1i_2,\ldots,i_p}=\alpha(e_{i_1},e_{i_2},\ldots,e_{i_p})$ and $\beta_{i_1i_2,\ldots,i_p}=\beta(e_{i_1},e_{i_2},\ldots,e_{i_p})$ denote the components of $\alpha$ and $\beta$, with respect to a local orthonormal frame $(e_i)_{i \leq m}$. Their pointwise inner product with respect to $g$ is given by
\begin{equation*}
\langle \alpha,\beta \rangle = \frac{1}{p!} \sum_{1 \le i_{1},\dots,i_{p}\le m}\alpha_{{i_1},\dots,{i_p}}\; \beta_{{i_1},\dots,{i_p}}.
\end{equation*}
We denote by $\Delta_{p}$ the Hodge de Rham Laplacian acting on $p$-forms  
\begin{equation*}
\Delta_{p} :=(d \, \delta + \delta  d),
\end{equation*}
where $d$ is the exterior derivative acting on $p$-forms and $\delta$ is the adjoint of $d$ with respect to the $L_{2}(g)$ global inner product.\\
The spectrum of  $\Delta_{p}$ consists of a nondecreasing, unbounded sequence of eigenvalues with finite multiplicities
$${\rm Spec}(\Delta_{p})=\{0 \le \lambda_{1}^{(p)} \le \lambda_{2}^{(p)} \le \lambda_{3}^{(p)} \le \cdots \le \lambda_{i}^{(p)} \le \cdots \}.$$
If we denote by $\nabla$ the extension to $p-$forms of the Levi-Civita connexion of $(M,g)$ and by $\nabla^{\ast}$ its formal adjoint with respect to the metric $g$, then the Bochner-Weitzenb\"{o}ck formula gives for any $\alpha  \in\bigwedge^{p}(M)$
\vspace{0.3cm}
\begin{center}
 $\Delta_{p} \, \alpha = \nabla^{\ast} \nabla \alpha +\mathcal{R}_{p}(\alpha)$
\end{center}
where $\mathcal{R}_{p}$ is the curvature term which is a selfadjoint endomorphism of $\bigwedge^{p}(M)$ defined  for any  $X_{1},\dots,X_{p} \in \Gamma(TM)$ by
\begin{align*}
\mathcal{R}_{p}(\alpha)(X_{1},\dots,X_{p}) &
=\sum_{i,j}(-1)^{i} i_{e_{j}}(R(e_{j},X_{i})\alpha)(X_{1},\dots,\hat{X_{i}},\dots,X_{p}),\\
\end{align*}
here $(e_{i})_{i \le m}$ is a local orthonormal frame as before and $R$ is the extension of the curvature tensor to forms which is given for $X,\,Y \in \Gamma(TM)$, by
\begin{equation*}  R(X,Y)\alpha= \nabla_{\left[X,Y\right]}\alpha - \left[\nabla_{X},\nabla_{Y}\right]\alpha, 
\end{equation*}
An immediate consequence of the Bochner-Weitzenb\"{o}ck formula is
the following
\begin{equation}\label{weitzenbock}
\langle \Delta_{p}\alpha,\alpha \rangle= |\nabla \alpha|^{2}+\frac{1}{2} \Delta |\alpha|^{2}+\langle \mathcal{R}_{p}(\alpha),\alpha \rangle.
\end{equation}
\\
\indent In this section, the main objective is to extend the universal inequality of Levitin and Parnovski (see inequality (4.14) in \cite{LevPar}) concerning the eigenvalues of the
Dirichlet Laplacian on bounded Euclidean domains to the eigenvalues of
the Hodge-de Rham Laplacian on closed Euclidean Submanifolds.\\

\begin{theorem} \label{theorem 1} Let $X:(M^m,g)\longrightarrow (\R^n,{\rm can})$ be an isometric
immersion and $H$ be its mean curvature vector field (i.e. the trace of its second fundamental form $h$). We have, for any
$p \in \left\{1,\dots,m\right\}$ and $j \in \N^{\ast}$,
\begin{equation}\label{ineq1}
\displaystyle{\sum_{l=1}^m \lambda_{j+l}^{(p)}}\leq
\displaystyle{4\bigg[\Big(1+\frac{m}{4}\Big)\lambda_j^{(p)}
-\int_{M}\langle
\mathcal{R}_p(\omega_j),\omega_j\rangle+\frac{1}{4}\int_{M}|H|^2|\omega_j|^2\bigg]},
\end{equation}
where $\Big\{\lambda_j^{(p)}\Big\}_{j=1}^\infty$ are the eigenvalues of $\Delta_p$ and $\{\omega_j\}_{j=1}^\infty$ is a corresponding orthonormal basis of $p-$eigenforms.
\end{theorem}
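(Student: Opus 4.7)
The plan is to run a Rayleigh--Ritz argument with trial $p$-forms built from $\omega_{j}$ and the $n$ ambient coordinates $X^{1},\dots,X^{n}$ of the immersion, following identity (4.14) of \cite{LevPar}. For each $\alpha\in\{1,\dots,n\}$ I set $\phi_{\alpha}:=X^{\alpha}\omega_{j}$ and let $\tilde{\phi}_{\alpha}:=\phi_{\alpha}-\sum_{i=1}^{j}\langle X^{\alpha}\omega_{j},\omega_{i}\rangle\,\omega_{i}$ be its $L^{2}$-projection onto the orthogonal complement of $\omega_{1},\dots,\omega_{j}$; these are admissible test forms for $\lambda_{j+1}^{(p)},\dots,\lambda_{j+m}^{(p)}$ via the min--max principle.

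The crucial local identity is the commutator. Using the product rule $\Delta_{p}(f\omega)=(\Delta_{0}f)\omega+f\Delta_{p}\omega-2\nabla_{\nabla f}\omega$ together with the extrinsic formula $\Delta_{0}X^{\alpha}=-H^{\alpha}$, I compute $(\Delta_{p}-\lambda_{j}^{(p)})\phi_{\alpha}=[\Delta_{p},X^{\alpha}]\omega_{j}=-H^{\alpha}\omega_{j}-2\nabla_{V^{\alpha}}\omega_{j},$ where $V^{\alpha}:=\nabla X^{\alpha}=(\partial_{\alpha})^{T}$ is the tangential projection of $\partial_{\alpha}\in\R^{n}$. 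Plugging the $\tilde{\phi}_{\alpha}$ into the Levitin--Parnovski algebraic identity --- a Ky Fan type trace inequality for the generalized eigenvalue problem $\tilde{B}\xi=\mu\tilde{G}\xi$ with $\tilde{G}_{\alpha\beta}=\langle\tilde{\phi}_{\alpha},\tilde{\phi}_{\beta}\rangle$ and $\tilde{B}_{\alpha\beta}=\langle\Delta_{p}\tilde{\phi}_{\alpha},\tilde{\phi}_{\beta}\rangle$ --- yields $\sum_{l=1}^{m}\bigl(\lambda_{j+l}^{(p)}-\lambda_{j}^{(p)}\bigr)\le\sum_{\alpha=1}^{n}\|[\Delta_{p},X^{\alpha}]\omega_{j}\|^{2},$ with $m$ (rather than $n$) on the left dictated by the identity $\sum_{\alpha}V^{\alpha}\otimes V^{\alpha}=\mathrm{Id}_{TM}$, whose trace is $\dim M=m$.

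The right-hand side above is now purely extrinsic: expanding $|[\Delta_{p},X^{\alpha}]\omega_{j}|^{2}$ and summing over $\alpha$, the three Pythagorean identities $\sum_{\alpha}|V^{\alpha}|^{2}=m$, $\sum_{\alpha}|\nabla_{V^{\alpha}}\omega_{j}|^{2}=|\nabla\omega_{j}|^{2}$, $\sum_{\alpha}(H^{\alpha})^{2}=|H|^{2}$, combined with the pointwise cancellation of the cross-term $\sum_{\alpha}H^{\alpha}V^{\alpha}=0$ (since $H$ is normal to $TM$), give $\sum_{\alpha=1}^{n}\|[\Delta_{p},X^{\alpha}]\omega_{j}\|^{2}=4\int_{M}|\nabla\omega_{j}|^{2}+\int_{M}|H|^{2}|\omega_{j}|^{2}.$ Integrating (\ref{weitzenbock}) over the closed manifold $M$ kills the $\tfrac{1}{2}\Delta|\omega_{j}|^{2}$ term and, with $\|\omega_{j}\|_{L^{2}}=1$, yields $\int_{M}|\nabla\omega_{j}|^{2}=\lambda_{j}^{(p)}-\int_{M}\langle\mathcal{R}_{p}(\omega_{j}),\omega_{j}\rangle$; substituting reproduces (\ref{ineq1}).

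The step I expect to be the principal obstacle is the Levitin--Parnovski variational inequality itself: controlling the generalized Rayleigh trace $\mathrm{tr}(\tilde{G}^{-1}\tilde{B})$ so as to deliver exactly $m$ eigenvalue bounds rather than $n$, while keeping track of the non-negative projection corrections $\sum_{\alpha,\,i\le j}(\lambda_{j}^{(p)}-\lambda_{i}^{(p)})|\langle X^{\alpha}\omega_{j},\omega_{i}\rangle|^{2}$ arising when passing from $\phi_{\alpha}$ to $\tilde{\phi}_{\alpha}$. Every subsequent step --- the commutator product rule, the Pythagorean summations, the cancellation $\sum_{\alpha}H^{\alpha}V^{\alpha}=0$, and the integrated Bochner--Weitzenb\"ock formula --- is routine once that inequality is established.
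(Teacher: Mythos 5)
Your setup of the commutator computation, the extrinsic identities ($\sum_{\alpha}|\nabla X^{\alpha}|^{2}=m$, $\sum_{\alpha}|\nabla_{\nabla X^{\alpha}}\omega_{j}|^{2}=|\nabla\omega_{j}|^{2}$, the vanishing of $\sum_{\alpha}(\Delta X^{\alpha})\nabla X^{\alpha}$ because $H$ is normal) and the integrated Bochner--Weitzenb\"ock step all match the paper and are correct. But the inequality you place at the centre of the argument,
\begin{equation*}
\sum_{l=1}^{m}\big(\lambda_{j+l}^{(p)}-\lambda_{j}^{(p)}\big)\;\le\;\sum_{\alpha=1}^{n}\big\|[\Delta_{p},X^{\alpha}]\omega_{j}\big\|_{L^{2}}^{2},
\end{equation*}
is essentially equivalent to the theorem itself, and you do not prove it: you explicitly defer it as ``the principal obstacle.'' Moreover, the mechanism you sketch for it --- a Ky Fan trace bound for $\mathrm{tr}(\tilde{G}^{-1}\tilde{B})$ built from the $n$ projected trial forms $\tilde{\phi}_{\alpha}$ --- is not the right one: an $n$-dimensional trial space orthogonal to $\omega_{1},\dots,\omega_{j}$ would produce $\sum_{l=1}^{n}\lambda_{j+l}^{(p)}$ on the left, which is false in general, and the remark that ``$m$ rather than $n$ is dictated by $\sum_{\alpha}V^{\alpha}\otimes V^{\alpha}=\mathrm{Id}_{TM}$'' is not an argument, since the Gram matrix $\tilde{G}$ is not controllable in the way you would need.

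What is actually required (and what the paper does) differs in two respects. First, one uses the Levitin--Parnovski \emph{trace identity} $\sum_{k}|\langle[\Delta_{p},X_{l}]\omega_{j},\omega_{k}\rangle|^{2}/(\lambda_{k}^{(p)}-\lambda_{j}^{(p)})=-\frac{1}{2}\langle[[\Delta_{p},X_{l}],X_{l}]\omega_{j},\omega_{j}\rangle=\int_{M}|\nabla X_{l}|^{2}|\omega_{j}|^{2}$, whose right-hand side carries the weight $\int_{M}|\nabla X_{l}|^{2}|\omega_{j}|^{2}$; your version drops this weight. Second, to bound the $k>j$ part of that sum by $(\lambda_{j+l}^{(p)}-\lambda_{j}^{(p)})^{-1}\|[\Delta_{p},X_{l}]\omega_{j}\|^{2}$ one must first rotate the ambient coordinates so that the matrix $\big(\int_{M}\langle[\Delta_{p},X_{l}]\omega_{j},\omega_{j+k}\rangle\big)_{k,l}$ is triangular --- this is what attaches the $l$-th gap $\lambda_{j+l}^{(p)}-\lambda_{j}^{(p)}$ to the $l$-th coordinate --- and only then does one pass from $n$ to $m$, via the pointwise rearrangement inequality $\sum_{l=1}^{n}|\nabla X_{l}|^{2}\lambda_{j+l}^{(p)}\ge\sum_{l=1}^{m}\lambda_{j+l}^{(p)}$, valid because the weights $|\nabla X_{l}|^{2}$ lie in $[0,1]$ and sum to $m$. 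Neither the triangularization nor the rearrangement step appears in your proposal, so the proof as written has a genuine gap at its key step.
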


\begin{proof}[Proof of Theorem \ref{theorem 1}] To prove this inequality, we need the
following algebraic identity obtained by Levitin and Parnovski (see identity 2.2 of Theorem 2.2 in \cite {LevPar}).

\begin{lemma}\label{levitin}
Let $L$ and $G$ be two self-adjoint operators with domains $D_{L}$ and $D_{G}$ contained in a same Hilbert space and such that
$G(D_L)\subseteq D_L \subseteq D_G$. Let $\lambda_j$ and $u_j$
be the eigenvalues and orthonormal eigenvectors of $L$. Then, for each $j$,
\begin{equation*} \displaystyle{\sum_{k}
\frac{|\langle[L,G]u_j,u_k\rangle|\;^2}{\lambda_k-\lambda_j}}=\displaystyle{-\frac{1}{2}\langle[[L,G],G]u_j,u_j\rangle}
\end{equation*}
(The summation is over all $k$ and is correctly defined even when
$\lambda_{k}=\lambda_{j}$ because in this case $\langle[L,G]u_j,u_k\rangle=0$
(see Lemma 2.1 in \cite{LevPar})).

\end{lemma}

Now (\ref{ineq1}) will follow by applying this Lemma \ref{levitin} with
$L=\Delta_p$ and $G=X_l$, where $X_l$ is one of the components
$(X_1,...,X_n)$ of the isometric immersion $X$. First, we have
\begin{align*}
{}\displaystyle{[[\Delta_p,X_l],X_l]\omega_j} & =  [\Delta_p,X_l](X_{l}\omega_{j})-X_{l}([\Delta_p,X_{l}]\omega_{j}) \\
{} & \displaystyle = (\Delta X_{l})(X_{l}\omega_{j})-2\nabla_{\nabla X_{l}}(X_{l}\omega_{j})\\
{} & \quad \displaystyle -X_{l}\Big((\Delta X_{l})\omega_{j}-2\nabla_{\nabla X_{l}}\omega_{j}\Big)\\
{} & \displaystyle = -2 |\nabla X_{l}|^{2} \omega_{j},
\end{align*}
hence\begin{equation*}
\displaystyle{-\frac{1}{2}\langle[[\Delta_p,X_l],X_l]\omega_j,\omega_j\rangle_{L^{2}}=\int_{M}|\nabla
X_l|^2|\omega_j|^2}. \end{equation*} Thus, Lemma \ref{levitin} gives

\begin{equation}\label{i}
\displaystyle{\sum_{k }\frac{\Big(\displaystyle\int_{M}\langle[\Delta_p,X_l]\omega_j,\omega_k\rangle\Big)^2}{\lambda_k^{(p)}-\lambda_j^{(p)}}=\int_{M}|\nabla
          X_l|^2|\omega_j|^2}.
\end{equation}
Now for a fixed $j$, let $A$ be the matrix
\begin{equation*}
\Big(\omega_{k,\;l}=\int_{M}\langle[\Delta_p, X_l]\omega_j,\omega_{j+k}\rangle\Big)_{1\leq k,\;l \leq n}.
\end{equation*}

Applying Gram-Schmidt orthogonalization, we can find an orthogonal
coordinate system such that $A$ has the following triangular form,

$$\left(\begin{array}{ccccc}
                                    \omega_{1,1} \\
                                     \omega_{2,1} &\omega_{2,2}&\text{{\huge{0}}}\\
                                    \vdots &  & \ddots \\
                                    \omega_{n,1}&\cdots &\cdots& \omega_{n,n}
\end{array}\right)$$
where $\omega_{k,\;l}=0\;\;if\;\;k<l$.\\
Equation (\ref{i}) can be written as follows
\begin{align}\label{j}
{} \int_{M}|\nabla X_l|^2|\omega_j|^2 & =  \sum_{k=1}^{j-1}\frac{\Big(\displaystyle \int_{M}\langle[\Delta_p,X_l]\omega_j,\omega_k\rangle\Big)^2}{\lambda_k^{(p)}-\lambda_j^{(p)}} \nonumber\\
{} & +\sum_{k=j+1}^{j+l-1}\frac{\Big(\displaystyle\int_{M}\langle[\Delta_p,X_l]\omega_j,\omega_k\rangle\Big)^2}{\lambda_k^{(p)}-\lambda_j^{(p)}} \nonumber\\
{} &
+\sum_{k=j+l}^{\infty}\frac{\Big(\displaystyle\int_{M}\langle[\Delta_p,X_l]\omega_j,\omega_k\rangle\Big)^2}{\lambda_k^{(p)}-\lambda_j^{(p)}}.
\end{align}
The first term of the right-hand side of equality (\ref{j}) is
nonpositive because $k <j$. The second term is equal to
\begin{equation*}\displaystyle{\sum_{k=1}^{l-1}\frac{\Big(\displaystyle\int_{M}\langle[\Delta_p,X_l]\omega_j,\omega_{j+k}\rangle\Big)^2}{\lambda_{j+k}^{(p)}-\lambda_j^{(p)}}}
\end{equation*}
which is equal to zero because $\displaystyle
\int_{M}\langle[\Delta_p,X_l]\omega_j,\omega_{j+k}\rangle=\omega_{k,\;l}=0$
if $k<l$, \\
therefore
\begin{align}\label{k}
{} \displaystyle{\int_{M}|\nabla X_l|^2|\omega_j|^2} & \leq \displaystyle{\sum_{k=j+l}^{\infty}\frac{\Big(\displaystyle\int_{M}\langle[\Delta_p,X_l]\omega_j,\omega_k\rangle\Big)^2}{\lambda_k^{(p)}-\lambda_j^{(p)}}} \nonumber\\
{} & \leq
\displaystyle{\frac{1}{\lambda_{j+l}^{(p)}-\lambda_j^{(p)}}\sum_{k=1}^\infty\Big(\int_{M}\langle[\Delta_p,X_l]\omega_j,\omega_k\rangle\Big)^2}.
\end{align}
Parceval's identity implies that
\begin{equation} \label{Parc}
\displaystyle \sum_{k=1}^\infty\Big(\int_{M}\langle[\Delta_p,X_l]\omega_j,\omega_k\rangle\Big)^2=\|[\Delta_p,X_l]\omega_j\|^2_{L^2}.
 \end{equation}
Hence using (\ref{k}), (\ref{Parc}) and summing on $l$, we obtain
\begin{align}\label{l}
{} \sum_{l=1}^{n}
\Big(\lambda_{j+l}^{(p)}-\lambda_j^{(p)}\Big)\bigg(\int_{M}|\nabla
X_l|^2|\omega_j|^2\bigg) & \leq \sum_{l=1}^n \|[\Delta_p,X_l]\omega_j\|^2_{L^2}.
\end{align}
We need now to calculate $\sum_{l=1}^n \|[\Delta_p,X_l]\omega_j\|^2_{L^2}$. First, we have
\begin{equation*}
 \left[\Delta_{p},X_l \right]=\left[\nabla^{\ast}\nabla,X_l \right],
\end{equation*}
because the curvature term in the Bochner-Weitzenb\"{o}ck formula is $\mathcal{C}^{\infty}(M)$-linear. Then, at a point $x \in M$, we take a local orthonormal frame $(e_{i})_{i \le m}$ of $M$ which
is normal at $x$. We have at $x$
\begin{align*}
 \left[\Delta_{p}, X_l\right]\omega_j & = \nabla^{\ast}\nabla
(X_l \omega_j)-X_l \nabla^{\ast}\nabla \omega_j \\
& = - \sum_{i \le m} \nabla_{e_{i}}\nabla_{e_{i}}(X_l \omega_j)-X_l \nabla^{\ast}\nabla \omega_j\\
& = -\sum_{i\le
m}e_{i}(e_{i}(X_l))\omega_j-2\nabla_{\nabla
X_l}\omega_j+X_l\nabla^{\ast}\nabla \omega_j-X_l \nabla^{\ast}\nabla \omega_j\\
& =(\Delta X_l)\omega_j-2\nabla_{\nabla X_l}\omega_j.
\end{align*}
Hence we obtain
\begin{equation*}
\left\|\left[\Delta_p,X_l\right]\omega_j\right\|^{2}_{L^{2}}= \int_{M}
(\Delta X_l)^{2}|\omega_j|^{2} + 4 \int_{M} |\nabla_{\nabla X_l}\omega_j|^{2}-4 \int_{M}
\langle (\Delta X_l) \omega_j,\nabla_{\nabla X_l}\omega_j \rangle.
\end{equation*}
Since $X$ is an isometric immersion, we have $\sum_{l \le n} |\nabla_{\nabla
X_{l}}\omega_{j}|^{2}=|\nabla \omega_{j}|^{2}$, ${(\Delta X_{1},\dots,\Delta
X_{n})=H}$ and $\sum_{l \le n}\langle (\Delta X_{l}) \omega_{j},\nabla_{\nabla X_{l}}\omega_{j}\rangle=\frac{1}{2} \langle H,\nabla |\omega_{j}|^{2}\rangle=0.$\\
Thus it follows that 
\begin{align}\label{c}
{} \sum_{l \le n}\left\|\left[\Delta_p,X_{l}\right]\omega_{j}\right\|^{2}_{L^{2}}=&\int_{M} \sum_{l \le n}(\Delta X_{l})^{2}|\omega_{j}|^{2} + 4 \int_{M} \sum_{l \le n}|\nabla_{\nabla X_{l}}\omega_{j}|^{2} \nonumber\\
{} & \quad -4\int_{M} \sum_{l \le n}\langle(\Delta X_{l})\omega_{j},\nabla_{\nabla X_{l}}\omega_{j}\rangle \nonumber \\
{}=& \int_{M} |H|^{2} |\omega_{j}|^{2}+4 \int_{M} |\nabla \omega_{j}|^{2}\nonumber\\
{}=& \int_{M} |H|^{2} |\omega_{j}|^{2}+4\lambda_{j}^{(p)}-4\int_{M} \langle \mathcal{R}_p(\omega_{j}),\omega_{j}\rangle. 
\end{align}
The last equality follows from the Bochner-Weitzenb\"{o}ck formula, in fact we have
\begin{equation*}
\int_{M} |\nabla \omega_{j}|^{2}= \int_{M} \langle \Delta_p
\omega_{j},\omega_{j} \rangle-\langle
\mathcal{R}_p(\omega_{j}),\omega_{j}\rangle =\lambda_{j}^{(p)}-\int
_{M}\langle \mathcal{R}_p(\omega_{j}),\omega_{j}\rangle.
\end{equation*}
On the other hand, since the immersion $X$ is isometric we have $\displaystyle{\sum_{l=1}^{n}|\nabla X_l|^2=m}$ and therefore
\begin{align}\label{m}
{}\sum_{l=1}^n \Big(\lambda_{j+l}^{(p)}-\lambda_j^{(p)}\Big)\bigg(\int_{M}|\nabla
X_l|^2|\omega_j|^2\bigg)&=\sum_{l=1}^n\bigg(\int_{M}|\nabla X_l|^2|\omega_j|^2\bigg)\lambda_{j+l}^{(p)} \nonumber\\
{} & -m\lambda_j^{(p)},
\end{align}
then we obtain, from (\ref{l}), (\ref{c}) and (\ref{m})
\begin{align}\label{n}
{} \sum_{l=1}^{n}\bigg(\int_{M}|\nabla X_l|^2|\omega_j|^2\bigg)\lambda_{j+l}^{(p)} & \leq  (4+m)\lambda_j^{(p)}-4\int_{M}\langle \mathcal{R}_p(\omega_j),\omega_j\rangle \nonumber\\
{}  & +\int_{M}|H|^2|\omega_j|^2.
\end{align}
To finish the proof, we will show that
\begin{equation}\label{o}
\sum_{l=1}^n |\nabla X_l|^2 \lambda_{j+l}^{(p)}  \ge \sum_{l=1}^m  \lambda_{j+l}^{(p)}.
\end{equation}
In fact, let us prove inequality (\ref{o}) at an arbitrary $x\in M$. Denote by $(\epsilon_{i})_{i\le n}$ the standard Euclidean basis of $\R^{n}$. Since the immersion $X$ is isometric, we deduce that: there exist $\,l_{1},\dots, l_{m} \in \left\{1,\dots,n\right\}$ and $ i_{1},\dots, i_{m} \in \left\{1,\dots,n\right\}$ such that
\begin{itemize}
\item For any $ k \in \left\{1,\dots,m\right\}:\quad l_{k} \ge k.$
\item For any $k \in \left\{1,\dots,m\right\}: \quad \nabla X_{l_{k}}(x)=\epsilon_{i_{k}}.$
\item For any $i \notin \left\{l_{1},\dots,l_{m}\right\}:\quad \nabla X_{i}(x)=0.$
\end{itemize}
Therefore, we have at $x$
\begin{equation*}
 \sum_{l=1}^n |\nabla X_l|^2 \lambda_{j+l}^{(p)}= \sum_{k=1}^{m}\lambda_{j+l_{k}}^{(p)} \ge \sum_{l=1}^m  \lambda_{j+l}^{(p)}
\end{equation*}
which proves inequality (\ref{o}).
\\
Finally, we deduce, from (\ref{n}) and (\ref{o}), that
\begin{equation*}
\displaystyle{\sum_{l=1}^m \lambda_{j+l}^{(p)}\leq
(4+m)\lambda_j^{(p)}-4\int_{M}\langle
\mathcal{R}_p(\omega_j),\omega_j\rangle+\int_{M}|H|^2|\omega_j|^2}.
\end{equation*}

\end{proof}
\begin{remark}
\begin{enumerate}
\item Note that our result does not depend on the dimension of the ambient space $\R^{n}$.
\item We observe that, the same ideas work for general operators of Laplace type acting on the sections of a Riemannian vector bundle on $M$ endowed with a Riemannian connexion.
\end{enumerate}
\end{remark}
\begin{corollary} \label{cor1}
Under the conditions of Theorem \ref{theorem 1} , we have for any $j \ge 1$,
\begin{equation*}
\sum_{l=1}^m \lambda_{j+l}^{(p)}\leq 4\bigg((1+\frac{m}{4})\lambda_j^{(p)}-\delta_1+\frac{1}{4}\delta_2\bigg),
\end{equation*}
where $\displaystyle \delta_1=\inf_{x\in M}\tilde{\mathcal{R}}_{p}(x)$,  $\tilde{\mathcal{R}}_{p}(x)$ being the smallest eigenvalue, at $x \in M$, of the endomorphism $(\mathcal{R}_p)_x$ of
$\bigwedge^p (T_x M)$, and $\delta_2=\sup |H|^{2}$.
\end{corollary}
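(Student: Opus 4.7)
The plan is to deduce this corollary directly from Theorem \ref{theorem 1} by replacing the two integral terms on the right-hand side of (\ref{ineq1}) with their pointwise majorants.

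First I would address the curvature term. Since $\mathcal{R}_p$ is a pointwise selfadjoint endomorphism of $\bigwedge^p(T_xM)$ and $\tilde{\mathcal{R}}_p(x)$ is its smallest eigenvalue at $x$, we have the Rayleigh-type bound $\langle \mathcal{R}_p(\omega_j),\omega_j\rangle(x) \ge \tilde{\mathcal{R}}_p(x)\,|\omega_j(x)|^2 \ge \delta_1\,|\omega_j(x)|^2$ for every $x\in M$. Integrating and using that $\omega_j$ is $L^2$-normalized ($\int_M |\omega_j|^2 = 1$) gives
\begin{equation*}
-\int_M \langle \mathcal{R}_p(\omega_j),\omega_j\rangle \le -\delta_1.
\end{equation*}

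Next I would handle the mean curvature term in the same way: $|H(x)|^2 \le \delta_2$ pointwise, hence
\begin{equation*}
\int_M |H|^2|\omega_j|^2 \le \delta_2 \int_M |\omega_j|^2 = \delta_2.
\end{equation*}

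Finally I would substitute both bounds into the inequality (\ref{ineq1}) of Theorem \ref{theorem 1} to conclude
\begin{equation*}
\sum_{l=1}^m \lambda_{j+l}^{(p)} \le 4\bigg[\Big(1+\frac{m}{4}\Big)\lambda_j^{(p)} - \delta_1 + \frac{1}{4}\delta_2\bigg].
\end{equation*}
There is no real obstacle here; the only point to be careful about is the sign convention for $\tilde{\mathcal{R}}_p$ (so that the infimum really produces a lower bound for the integrand and hence the correct sign after negation), and the fact that the $L^2$-normalization of the eigenforms turns the pointwise bounds into the clean constants $\delta_1$ and $\delta_2$.
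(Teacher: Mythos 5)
Your proposal is correct and is exactly the argument the paper intends: the corollary is stated as an immediate consequence of Theorem \ref{theorem 1}, obtained by bounding $\langle \mathcal{R}_p(\omega_j),\omega_j\rangle$ below by $\delta_1|\omega_j|^2$ and $|H|^2$ above by $\delta_2$, then using the $L^2$-normalization of $\omega_j$. No further comment is needed.
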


The inequalities of Theorem \ref{theorem 1} and Corollary \ref{cor1} depends on the intrinsic geometry of the Submanifold $M$, because it involves the curvature term $\mathcal{R}_p$. Using as in Theorem 3.2 of  \cite{IlMa}, the extrinsic estimate we derived for $\mathcal{R}_p$ in terms of the second fundamental form $h$ and the mean curvature $H$ of the immersion $X$ of $M$, we obtain 
\begin{theorem} \label{PPW} Under the conditions of Theorem \ref{theorem 1}, we have for any $j \ge 1$,
\begin{equation}\label{q}
{} \displaystyle \sum_{l=1}^m \lambda_{j+l}^{(p)} \leq
4\bigg\{\Big(1+\frac{m}{4}\Big)\lambda_j^{(p)}+\int_{M}\phi(h,H)|\omega_j|^2\bigg\},
\end{equation}
where \begin{align*}
\phi(h,H)=&p^2\bigg[\Big(\frac{m-5}{4}\Big)|H|^2+|h|^2-\frac{1}{4m^2}\Big(\sqrt{m-1}(m-2)|H|\\
{}&-2\sqrt{m|h|^2-|H|^2}\,\Big)^2
\bigg]+\frac{1}{2}\sqrt{p}(p-1)\Big(|H|^2+|h|^2\Big)+\frac{1}{4}|H|^2.
\end{align*}
\end{theorem}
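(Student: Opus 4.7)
The strategy is to refine inequality (\ref{ineq1}) of Theorem \ref{theorem 1} by replacing the intrinsic curvature integrand $-\langle \mathcal{R}_p(\omega_j),\omega_j\rangle$ with an upper bound that depends only on the extrinsic geometry of $M$. Since the integrand is pointwise, it suffices to establish a pointwise estimate
\begin{equation*}
-\langle \mathcal{R}_p(\omega),\omega\rangle \le \psi(h,H)\,|\omega|^{2}
\end{equation*}
at each point of $M$ and for every $p$-form $\omega$, with an explicit $\psi$ built from $|h|^{2}$, $|H|^{2}$ and $|H|$; the conclusion (\ref{q}) will then follow by integration against $|\omega_j|^{2}$.

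To obtain such a pointwise bound I would first invoke the Gauss equation. Because the ambient metric on $\R^{n}$ is flat, the Riemann tensor of $(M,g)$ satisfies
\begin{equation*}
R(X,Y,Z,T)=\langle h(X,T),h(Y,Z)\rangle-\langle h(X,Z),h(Y,T)\rangle,
\end{equation*}
so every intrinsic curvature quantity is an explicit polynomial in the components of $h$. Substituting this identity into the definition of the endomorphism $\mathcal{R}_p$ appearing in the Bochner--Weitzenb\"ock formula (\ref{weitzenbock}) realises $\langle \mathcal{R}_p(\omega),\omega\rangle$ as a quadratic form in the components of $\omega$ whose coefficients depend algebraically on $h$ and on its trace $H$.

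The second step is to apply to this quadratic form the pointwise extrinsic estimate the authors derived as Theorem 3.2 of \cite{IlMa}. Its proof is a chain of Cauchy--Schwarz and arithmetic--geometric inequalities on the algebraic expression from the previous step, together with an optimization over an auxiliary parameter. This optimization is precisely what produces the square term $\bigl(\sqrt{m-1}\,(m-2)|H|-2\sqrt{m|h|^{2}-|H|^{2}}\bigr)^{2}$ in the definition of $\phi$, as well as the otherwise unexpected factor $\sqrt{p}$ in the middle summand; this is also the most delicate part of the argument, since a non-sharp bound at this stage would immediately weaken (\ref{q}).

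Once this pointwise estimate is in hand, what remains is only bookkeeping: insert it into (\ref{ineq1}), combine the $\psi(h,H)$ term so obtained with the $\frac{1}{4}|H|^{2}$ summand already present on the right-hand side of (\ref{ineq1}), and recognise the sum as $\phi(h,H)$. The only genuine obstacle is therefore the reproduction of the sharp extrinsic bound for $\mathcal{R}_p$ from \cite{IlMa}; granting that ingredient, Theorem \ref{PPW} is a direct consequence of Theorem \ref{theorem 1}.
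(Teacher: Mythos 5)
Your proposal is correct and follows essentially the same route as the paper: the authors also deduce Theorem \ref{PPW} directly from inequality (\ref{ineq1}) of Theorem \ref{theorem 1} by substituting the pointwise extrinsic lower bound for $\langle\mathcal{R}_p(\omega_j),\omega_j\rangle$ from Theorem 3.2 of \cite{IlMa} and absorbing the remaining $\frac{1}{4}|H|^2$ term into $\phi(h,H)$. The paper simply cites that estimate rather than re-deriving it, exactly as you do.
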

\begin{proof}[Proof of Theorem \ref{PPW}]
This theorem follows immediately from the estimate of $\mathcal{R}_p$ obtained in Theorem 3.2 of \cite{IlMa} 
\begin{align*}
\langle\mathcal{R}_p(\omega_j),\omega_j\rangle \ge& \bigg\{-p^2\bigg[\Big(\frac{m-5}{4}\Big)|H|^2+|h|^2-\frac{1}{4m^2}\Big(\sqrt{m-1}(m-2)|H|\\
{}&-2\sqrt{m|h|^2-|H|^2}\,\Big)^2\bigg]-\frac{1}{2}\sqrt{p}(p-1)\Big(|H|^2+|h|^2\Big)\bigg\}|\omega_j|^2.
\end{align*}
\end{proof}

One can obviously eliminate the dependence on $\omega_j$ by taking
the supremum of $\phi(h,H)$, and obtain the following extension of the Levitin and Parnovski inequality which depends only on extrinsic invariants of the Submanifold $M$,
\begin{corollary} \label {corLP}
 Under the conditions of Theorem \ref{theorem 1}, $\forall j \ge 1$ we have
\begin{equation}\label{r}
\displaystyle \sum_{l=1}^m \lambda_{j+l}^{(p)} \leq 4 \bigg\{ \Big(1+\frac{m}{4}\Big)\lambda_j^{(p)}+ \|\phi(h,H)\|_{\infty} \bigg\},
\end{equation}
where  $\phi(h,H)$ is as in Theorem \ref{PPW}.
\end{corollary}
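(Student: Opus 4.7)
The plan is to deduce Corollary \ref{corLP} as a direct consequence of Theorem \ref{PPW} by removing the dependence on the eigenform $\omega_j$ inside the integral $\int_M \phi(h,H)|\omega_j|^2$. The strategy is simply to pull $\phi(h,H)$ out of the integral via a supremum bound, then use the $L^2$-normalization of $\omega_j$.

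First, I would start from inequality \eqref{q} of Theorem \ref{PPW}, which reads
\begin{equation*}
\sum_{l=1}^m \lambda_{j+l}^{(p)} \leq 4\bigg\{\Big(1+\frac{m}{4}\Big)\lambda_j^{(p)}+\int_{M}\phi(h,H)|\omega_j|^2\bigg\}.
\end{equation*}
Since $M$ is compact and $h$, $H$ are continuous tensor fields on $M$, the function $\phi(h,H)$ is a bounded continuous function on $M$, so $\|\phi(h,H)\|_{\infty}:=\sup_M \phi(h,H)$ is finite (if $\phi(h,H)$ happens to take negative values somewhere the bound below is still correct, though possibly not sharp). Then I would estimate
\begin{equation*}
\int_M \phi(h,H)|\omega_j|^2 \leq \|\phi(h,H)\|_{\infty}\int_M |\omega_j|^2 = \|\phi(h,H)\|_{\infty},
\end{equation*}
where the last equality uses that $\{\omega_j\}_{j=1}^\infty$ is an $L^2$-orthonormal basis of eigenforms, so in particular $\|\omega_j\|_{L^2}=1$.

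Substituting this estimate back into \eqref{q} immediately yields the claimed inequality \eqref{r}. There is really no obstacle here: the nontrivial content of the corollary is entirely concentrated in Theorem \ref{PPW} (and, before it, in Theorem \ref{theorem 1} together with the extrinsic estimate of $\mathcal{R}_p$ from \cite{IlMa}). The only conceptual point worth flagging is that this step is exactly the trade-off announced in the paragraph preceding the statement: by replacing $\int_M \phi(h,H)|\omega_j|^2$ with $\|\phi(h,H)\|_\infty$, we lose any dependence on the particular eigenform $\omega_j$ and obtain a bound that involves only the extrinsic invariants $h$ and $H$ of the immersion $X$, which is the desired form of a universal inequality of Levitin--Parnovski type for Euclidean submanifolds.
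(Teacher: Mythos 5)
Your proof is correct and is exactly the argument the paper has in mind: the text preceding the corollary simply notes that one "eliminates the dependence on $\omega_j$ by taking the supremum of $\phi(h,H)$," i.e.\ bounding $\int_M \phi(h,H)|\omega_j|^2 \leq \|\phi(h,H)\|_{\infty}$ using $\|\omega_j\|_{L^2}=1$ in inequality \eqref{q}. No further comment is needed.
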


In the particular case where $j=1$, we obtain
\begin{corollary} Under the
conditions of Theorem \ref{theorem 1}, we have
\begin{equation*}
\displaystyle \sum_{l=1}^m \lambda_{l+1}^{(p)} \leq
\Phi(h,H),
\end{equation*}
where
\begin{align*}
{}\Phi(h,H)=4\bigg\{\frac{p}{m(m-1)Vol(M)}& \Big(1+\frac{m}{4}\Big)\int_M
\Big[(m-p) |H|^2+(p-1)|h|^2 \Big]dV_M\\
{} + \|\phi(h,H)\|_{\infty}\bigg\}.
\end{align*}
\end{corollary}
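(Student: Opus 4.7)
The plan is to combine the previously proved Corollary \ref{corLP} (specialized to $j=1$) with the Asada inequality (\ref{as}) recalled in the introduction; the final bound is essentially the substitution of an extrinsic upper bound for $\lambda_1^{(p)}$ into the universal inequality.

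More precisely, I would first set $j=1$ in Corollary \ref{corLP} to obtain
\begin{equation*}
\sum_{l=1}^m \lambda_{l+1}^{(p)} \le 4\bigg\{\Big(1+\frac{m}{4}\Big)\lambda_1^{(p)} + \|\phi(h,H)\|_\infty\bigg\}.
\end{equation*}
This is the ``universal'' part of the estimate and requires no further work. The only remaining quantity that is not already extrinsic is $\lambda_1^{(p)}$, so the task reduces to inserting a purely extrinsic upper bound for the first positive eigenvalue of $\Delta_p$.

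For that bound I would quote the Asada inequality (\ref{as}) from the introduction,
\begin{equation*}
\lambda_1^{(p)}(M) \le \frac{p}{m(m-1)\,\mathrm{Vol}(M)} \int_M \Big[(m-p)|H|^2 + (p-1)|h|^2\Big]\, dV_M,
\end{equation*}
which is exactly the Reilly/Asada type upper bound for $\lambda_1^{(p)}$ of an isometrically immersed compact Riemannian manifold in Euclidean space. Multiplying this by $4(1+m/4)$ and adding $4\|\phi(h,H)\|_\infty$ yields precisely $\Phi(h,H)$ as defined in the statement.

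No new ideas or computations are required beyond chaining these two results, so there is no real obstacle; the only mild subtlety is to be consistent about the admissible range of $p$ (the Asada inequality is stated for $p$-forms with $1 \le p \le m-1$ in general, and the $p=m$ case would need a separate remark if included), but within the hypotheses inherited from Theorem \ref{theorem 1} the substitution is direct.
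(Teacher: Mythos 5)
Your proposal is correct and follows exactly the paper's own proof: specialize Corollary \ref{corLP} to $j=1$ and then bound $\lambda_1^{(p)}$ by the Asada inequality (\ref{asada's ineq}). Nothing further is needed.
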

\begin{proof}
Inequality (\ref{r}) gives, for $j=1$
\begin{equation*}
\sum_{l=1}^m \lambda_{l+1}^{(p)} \leq 4\bigg\{
\bigg(1+\frac{m}{4}\bigg)\lambda_1^{(p)}+ \|\phi(h,H)\|_{\infty}
\bigg\}.
\end{equation*}
We finish the proof by using the Asada inequality \cite{Asada}, 
\begin{equation}\label{asada's ineq}
\lambda_1^{(p)}(M) \leq \frac{p}{m(m-1) Vol(M)} \int _M
\Big[(m-p)|H|^2+(p-1)|h|^2\Big] dV_M.
\end{equation}

\end{proof}

For any $j \ge 1$, an immediate consequence of the precedent
corollary is the following upper bound for  $\lambda_{j+m}^{(p)}$ in
terms of the second fundamental form and the mean curvature,
\begin{corollary}
Under the conditions of Theorem \ref{theorem 1}, we have for any $j \ge 1$,
\begin{align}\label{t}
{}\lambda_{j+m}^{(p)} \leq
4\bigg\{d_1(m,j)\frac{p}{Vol(M)}\int_M\Big[(m-p)|H|^2+(p-1)|h|^2
\Big]dV_M \nonumber\\
{} +d_2(m,j) \|\phi(h,H)\|_{\infty}\bigg\},
\end{align}
where
$d_1(m,j)=\frac{1}{m(m-1)}\Big(1+\frac{m}{4}\Big)\Big(1+\frac{4}{m}\Big)^{j-1}$
and\\
$d_2(m,j)=\Big(1+\frac{m}{4}\Big)\Big(1+\frac{4}{m}\Big)^{j-1}-\frac{m}{4}$
\end{corollary}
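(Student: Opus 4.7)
The plan is to iterate Corollary \ref{corLP} to bound $\lambda_j^{(p)}$ in terms of $\lambda_1^{(p)}$ and then invoke Asada's inequality (\ref{asada's ineq}) to close the loop. Inequality (\ref{r}) reads $\sum_{l=1}^m \lambda_{j+l}^{(p)} \le 4(1+\tfrac{m}{4})\lambda_j^{(p)} + 4\|\phi(h,H)\|_\infty$, and the monotonicity of the spectrum yields two useful consequences: retaining only the largest term on the left gives
\[ \lambda_{j+m}^{(p)} \le 4\Big(1+\tfrac{m}{4}\Big)\lambda_j^{(p)} + 4\|\phi(h,H)\|_\infty, \]
whereas bounding the left below by $m\lambda_{j+1}^{(p)}$ yields the recursion
\[ \lambda_{j+1}^{(p)} \le \Big(1+\tfrac{4}{m}\Big)\lambda_j^{(p)} + \tfrac{4}{m}\|\phi(h,H)\|_\infty. \]

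Next I would iterate this recursion $j-1$ times starting from $\lambda_1^{(p)}$. Because $(1+\tfrac{4}{m})-1=\tfrac{4}{m}$ cancels cleanly in the geometric sum, one obtains
\[ \lambda_j^{(p)} \le \Big(1+\tfrac{4}{m}\Big)^{j-1}\lambda_1^{(p)} + \Big[\Big(1+\tfrac{4}{m}\Big)^{j-1} - 1\Big]\|\phi(h,H)\|_\infty. \]
Plugging this back into the first displayed inequality and regrouping produces
\[ \lambda_{j+m}^{(p)} \le 4\Big(1+\tfrac{m}{4}\Big)\Big(1+\tfrac{4}{m}\Big)^{j-1}\lambda_1^{(p)} + 4\Big[\Big(1+\tfrac{m}{4}\Big)\Big(1+\tfrac{4}{m}\Big)^{j-1} - \tfrac{m}{4}\Big]\|\phi(h,H)\|_\infty, \]
in which the bracket multiplying $\|\phi(h,H)\|_\infty$ is already exactly $d_2(m,j)$.

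Finally, I would apply Asada's inequality (\ref{asada's ineq}) to estimate $\lambda_1^{(p)}$; this converts the coefficient of $\lambda_1^{(p)}$ into precisely $4d_1(m,j)\cdot\frac{p}{Vol(M)}\int_M[(m-p)|H|^2+(p-1)|h|^2]\,dV_M$ and delivers inequality (\ref{t}). I do not foresee any substantive analytic obstacle: the argument is a routine iteration and algebraic bookkeeping on top of Corollary \ref{corLP} and Asada's inequality, which is presumably why the paper describes the bound as an \emph{immediate consequence} of the preceding corollary. The only step that requires care is the telescoping of the geometric series so that the coefficients fold neatly into the compact expressions $d_1(m,j)$ and $d_2(m,j)$.
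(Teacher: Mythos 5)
Your argument is correct, and the final combination is the same as the paper's: both proofs start from the one-term consequence $\lambda_{j+m}^{(p)} \leq 4\big[(1+\frac{m}{4})\lambda_j^{(p)} + \|\phi(h,H)\|_{\infty}\big]$ of (\ref{q}) and then substitute an upper bound for $\lambda_j^{(p)}$. The difference is in where that upper bound comes from: the paper simply imports inequality (\ref{s}) from Corollary 3.6 of \cite{IlMa}, whereas you rederive it inside this paper by extracting the recursion $\lambda_{j+1}^{(p)} \leq (1+\frac{4}{m})\lambda_j^{(p)} + \frac{4}{m}\|\phi(h,H)\|_{\infty}$ from Corollary \ref{corLP}, telescoping the geometric sum (the cancellation $b/(a-1)=\|\phi(h,H)\|_{\infty}$ works exactly as you say), and closing with Asada's inequality (\ref{asada's ineq}) applied to $\lambda_1^{(p)}$; your bookkeeping for $d_1(m,j)$ and $d_2(m,j)$ checks out. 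What your route buys is self-containedness: the corollary no longer depends on an external result beyond Asada, and in effect you have reproved (\ref{s}). The only points worth making explicit are that discarding the terms $\lambda_{j+1}^{(p)},\dots,\lambda_{j+m-1}^{(p)}$ and bounding the sum below by $m\lambda_{j+1}^{(p)}$ both use the nonnegativity and monotonicity of the spectrum of $\Delta_p$, and that Asada's inequality concerns the first \emph{positive} eigenvalue, so the case $\lambda_1^{(p)}=0$ must be handled by observing the right-hand side is nonnegative (the paper's own use of (\ref{asada's ineq}) has the same feature).
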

\begin{proof}
We infer from (\ref{q})
\begin{equation*}
\lambda_{j+m}^{(p)} \leq 4\Big[ \Big(1+\frac{m}{4} \Big)
\lambda_j^{(p)} + \|\phi(h,H)\|_{\infty}\Big],
\end{equation*}
to obtain inequality (\ref{t}), we combine this last inequality with the following inequality obtained by us in \cite{IlMa} (see Corollary 3.6),
\begin{align}\label{s}
{}\lambda_j^{(p)} \leq \frac{1}{m(m-1)} \Big(1+\frac{4}{m}\Big)^{j-1} \frac{p}{Vol(M)} & \int_M \Big[ (m-p)
|H|^2+(p-1)|h|^2 \Big] dV_M
\nonumber\\
{} &+ \Big(\Big(1+\frac{4}{m} \Big)^{j-1}-1 \Big) \|\phi(h,H)\|_{\infty}.
\end{align}

\end{proof}

\begin{remark}
\begin{enumerate}
\item Note that inequality (\ref{t}) is sharper than inequality (\ref{s}) for
$j+m$.
\item We can obtain similar results for closed Submanifolds of compact rank one symmetric spaces but the expressions of $\phi(h,H)$ and $\Phi(h,H)$ in this case are complicated.
\end{enumerate}
\end{remark}
In the particular case where $p=0$ (i.e. for functions), all the arguments used
in the proof of Theorem \ref{theorem 1} work under the Dirichlet boundary condition, when $M$ has
boundary . The reason why all these arguments work in this case is that the product of a function $G$ by a function vanishing on $\partial M$ also vanishes on $\partial M$ (this is neither the case for the absolute boundary condition nor the relative one for $p-$forms when $p\ge1$). Then, we easily obtain

\begin{corollary} \label{cor theorem 1} Let $(M,g)$ be a compact $m$-dimensional Riemannian manifold eventually with boundary and let $X:(M,g)\longrightarrow (\mathbb{R}^{n},{\rm can})$ be an isometric immersion. For any bounded potential $q$ on $M$, the spectrum of $L=\Delta+q$ (with Dirichlet boundary condition if $\partial M \ne \emptyset$) must satisfy, for $j \ge 1$,
 \begin{align*}
{} \displaystyle \sum_{l=1}^m \lambda_{j+l} & \leq 4 \Big[
(1+\frac{m}{4}) \lambda_{j}+\int_{M}\Big(\frac{1}{4}\vert H
\vert^2-q\Big)u_j^2\Big]\\
& \leq (4+m) \lambda_{j}+\left\|\vert H\vert^2-4q\right\|_{\infty},
 \end{align*}
where $u_{j}$ are $L^{2}-$normalized eigenfunctions.
\end{corollary}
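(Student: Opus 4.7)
The plan is to mimic the proof of Theorem \ref{theorem 1} almost verbatim, specialized to $p=0$, but with two adjustments: the operator $\Delta$ is replaced by $L=\Delta+q$, and the Hilbert space is $L^{2}(M)$ with the Dirichlet boundary condition (when $\partial M\ne\emptyset$). The key structural remark that makes the scheme go through in the Dirichlet setting is that multiplication by a component $X_{l}$ of the immersion preserves the space of functions vanishing on $\partial M$, so the Levitin--Parnovski identity of Lemma \ref{levitin} is applicable to the pair $(L,X_{l})$.

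First I would apply Lemma \ref{levitin} with $L=\Delta+q$ and $G=X_{l}$. Because $q$ is a multiplication operator, $[q,X_{l}]=0$, so $[L,X_{l}]=[\Delta,X_{l}]$ and also $[[L,X_{l}],X_{l}]=[[\Delta,X_{l}],X_{l}]=-2|\nabla X_{l}|^{2}$, exactly as in the computation at the beginning of the proof of Theorem \ref{theorem 1}. Running the Gram--Schmidt/triangularization argument on the $n\times n$ matrix of coefficients $\omega_{k,l}=\int_{M}\langle[L,X_{l}]u_{j},u_{j+k}\rangle$ and invoking Parseval exactly as in (\ref{k})--(\ref{Parc}) yields
\begin{equation*}
\sum_{l=1}^{n}(\lambda_{j+l}-\lambda_{j})\int_{M}|\nabla X_{l}|^{2}u_{j}^{2}\le \sum_{l=1}^{n}\|[L,X_{l}]u_{j}\|_{L^{2}}^{2}.
\end{equation*}

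Next I would compute the right-hand side. On functions $[\Delta,X_{l}]u_{j}=(\Delta X_{l})u_{j}-2\langle\nabla X_{l},\nabla u_{j}\rangle$, so expanding $\|[L,X_{l}]u_{j}\|_{L^{2}}^{2}$ and summing on $l$ uses three isometric-immersion identities: $\sum_{l}|\nabla X_{l}|^{2}=m$, $(\Delta X_{1},\dots,\Delta X_{n})=H$ (Beltrami), and $\sum_{l}(\Delta X_{l})\nabla X_{l}=H^{T}=0$ (since $H$ is normal). This yields
\begin{equation*}
\sum_{l=1}^{n}\|[L,X_{l}]u_{j}\|_{L^{2}}^{2}=\int_{M}|H|^{2}u_{j}^{2}+4\int_{M}|\nabla u_{j}|^{2}=\int_{M}|H|^{2}u_{j}^{2}+4\lambda_{j}-4\int_{M}qu_{j}^{2},
\end{equation*}
the last step using $\int_{M}|\nabla u_{j}|^{2}=\langle Lu_{j},u_{j}\rangle-\int_{M}qu_{j}^{2}=\lambda_{j}-\int_{M}qu_{j}^{2}$ from integration by parts (legitimate because $u_{j}$ satisfies the Dirichlet condition).

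Finally I would combine these with the pointwise inequality (\ref{o}) (whose proof is purely linear-algebraic at a point and works unchanged), integrated against $u_{j}^{2}$, to obtain $\sum_{l=1}^{n}\lambda_{j+l}\int_{M}|\nabla X_{l}|^{2}u_{j}^{2}\ge \sum_{l=1}^{m}\lambda_{j+l}$. Subtracting $m\lambda_{j}=\lambda_{j}\sum_{l}\int_{M}|\nabla X_{l}|^{2}u_{j}^{2}$ from both sides and using the estimate above gives the first inequality of the corollary, and bounding the integrand by its sup norm yields the second. I expect no substantial obstacle: the only subtlety is checking that $X_{l}u_{j}$ and the intermediate commutator terms belong to the domain of $L$ with the Dirichlet condition, which is immediate because $X_{l}$ is smooth and bounded on $M$ and multiplication by it preserves vanishing on $\partial M$.
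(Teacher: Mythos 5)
Your proposal is correct and follows essentially the same route as the paper, which simply asserts that all the arguments of Theorem \ref{theorem 1} carry over verbatim for $p=0$ under the Dirichlet condition (because multiplication by $X_{l}$ preserves vanishing on $\partial M$), with $[q,X_{l}]=0$ making the potential harmless in the commutators and the identity $\int_{M}|\nabla u_{j}|^{2}=\lambda_{j}-\int_{M}qu_{j}^{2}$ replacing the Bochner--Weitzenb\"ock step. All the details you supply (the triangularization, Parseval, the three isometric-immersion identities, and the pointwise inequality (\ref{o})) match the paper's intended argument.
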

This corollary extends, the universal inequality of Levitin and
Parnovski to compact Submanifolds of $\R^n$ and gives for $k=1$ the main result of \cite{ChengChen}.\\

When $M$ is without boundary and $q=0$, we have $\lambda_{1}=0$ and the associated normalized eigenfunction is $u_{1}=\displaystyle{\frac{1}{\sqrt{Vol(M)}}}$; in this case, the Corollary \ref{cor theorem 1} gives the following generalization of Reilly's inequality for the first nonzero eigenvalue of the Laplacian operator on Euclidean closed Submanifolds \cite{Reilly},
\begin{corollary} \label{cor Reilly general} Let $(M,g)$ be a compact $m$-dimensional Riemannian manifold and let $X:(M,g)\longrightarrow (\mathbb{R}^{n},{\rm can})$ be an isometric immersion of mean curvature $H$. Then the spectrum of $\Delta$ must satisfy 
\begin{equation*}
 \displaystyle \sum_{k=1}^m \lambda_{k+1}\leq \frac{1}{Vol(M)}\int_{M} |H|^2.
\end{equation*}
\end{corollary}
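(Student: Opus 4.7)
The plan is to specialize Corollary \ref{cor theorem 1} to the case $j=1$, $q\equiv 0$, exploiting the specific features of a closed manifold.

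First, I would observe that when $\partial M = \emptyset$, the operator $L = \Delta + q$ with $q=0$ reduces to the Laplacian acting on functions, with no boundary condition required. Its first eigenvalue is $\lambda_{1}=0$, and the corresponding $L^{2}$-normalized eigenfunction is the constant function $u_{1}=1/\sqrt{Vol(M)}$.

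Next, I would plug $j=1$ and $q=0$ into the sharp (integrated) form of Corollary \ref{cor theorem 1}:
\[
\sum_{l=1}^{m} \lambda_{1+l} \leq 4\Big[\Big(1+\tfrac{m}{4}\Big)\lambda_{1}+\int_{M}\tfrac{1}{4}|H|^{2}\,u_{1}^{2}\Big].
\]
The first term on the right vanishes because $\lambda_{1}=0$, and the remaining integral simplifies since $u_{1}^{2}=1/Vol(M)$ is constant; pulling it out and cancelling the factors of $4$ yields precisely $\frac{1}{Vol(M)}\int_{M}|H|^{2}$, which is the claimed inequality.

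There is no real analytic obstacle at this stage: all the work is already packaged into Corollary \ref{cor theorem 1}, and the present statement is a clean specialization. The only subtlety worth flagging in writing the proof is that one must use the integrated form of the upper bound (the one containing $\int_{M}|H|^{2}u_{j}^{2}$) rather than the coarser $L^{\infty}$ form, since replacing $|H|^{2}$ by its supremum would give the strictly weaker bound $\|H\|_{\infty}^{2}$ in place of the mean value $\frac{1}{Vol(M)}\int_{M}|H|^{2}$ that appears in Reilly's original inequality.
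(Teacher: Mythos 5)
Your proposal is correct and coincides with the paper's own argument: the authors likewise note that for $\partial M=\emptyset$ and $q=0$ one has $\lambda_{1}=0$ with $u_{1}=1/\sqrt{Vol(M)}$, and then read off the inequality from the integrated form of Corollary \ref{cor theorem 1} at $j=1$. Your remark that one must use the integrated bound rather than the $L^{\infty}$ bound is exactly the right point to flag.
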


As in \cite{Soufi.Harl.Ilias} (Lemma 3.1) or \cite{IlMa}, using the standard
embeddings of rank one compact symmetric spaces, we deduce easily from Corollary \ref{cor theorem 1} the following
\begin{corollary} \label{corRSS} Let $\overline{M}$ be the Sphere $\mathbb{S}^{n}$, the real projective space $\mathbb{R}P^{n}$, the complex projective space $\mathbb{C}P^{n}$ or the quaternionic projective space $\mathbb{Q}P^{n}$ endowed with their respective standard metrics. Let $(M,g)$ be a compact $m$-dimensional Riemannian manifold eventually with boundary and let $X:M\longrightarrow \overline{M}$ be an isometric immersion. For any bounded potential $q$ on $M$, the spectrum of $L=\Delta+q$ (with Dirichlet boundary condition if $\partial M \ne \emptyset$) must satisfy, for $j \ge 1$,
 \begin{align*}
{} \displaystyle \sum_{l=1}^m \lambda_{j+l} & \leq 4 \Big[ \Big(1+\frac{m}{4}\Big) \lambda_{j}+\int_{M}\Big(\frac{1}{4}\big(\vert H \vert^2+d(m)\big)-q\Big)u_j^2\Big]\\
{} & \le (4+m) \lambda_{j}+\left\|\vert H \vert^2+d(m)-4q\right\|_{\infty},
\end{align*}
where $u_{j}$ are $L^{2}-$ normalized eigenfunctions and where
\begin{equation*}
d(m)=
    \begin{cases}
    m^{2},     &\text{if $\overline{M}=\mathbb{S}^{n}$}\\
    2m(m+1),   &\text{if $\overline{M}= \mathbb{R}P^{n}$}\\
    2m(m+2),   &\text{if $\overline{M}= \mathbb{C}P^{n}$}\\
    2m(m+4),    &\text{if $\overline{M}= \mathbb{Q}P^{n}$}.\\

    \end{cases}
\end{equation*}
\end{corollary}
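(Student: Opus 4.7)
The plan is to reduce Corollary \ref{corRSS} to the Euclidean statement already handled by Corollary \ref{cor theorem 1}. The reduction, indicated in the statement itself, proceeds by composing $X:M\to\overline{M}$ with the canonical equivariant isometric embedding $\iota:\overline{M}\to\R^{N}$: for $\overline{M}=\mathbb{S}^n$, $\iota$ is the inclusion as the unit sphere in $\R^{n+1}$; for the projective spaces $\mathbb{R}P^n$, $\mathbb{C}P^n$, $\mathbb{Q}P^n$, $\iota$ is the classical first-eigenfunction embedding realizing $\overline{M}$ as a minimal submanifold of a Euclidean sphere inside a suitable space of (Hermitian) matrices. Then $\tilde X:=\iota\circ X:M\to\R^N$ is an isometric immersion, and the intrinsic operator $L=\Delta_M+q$ is unchanged, so we may apply Corollary \ref{cor theorem 1} to $\tilde X$ with the same potential $q$.

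The key step is to relate the mean curvature $\tilde H$ of $\tilde X$ to the mean curvature $H$ of $X$. Along $M\subset\overline{M}\subset\R^N$ the second fundamental forms split as $\tilde h(Y,Z)=h(Y,Z)+h_\iota(Y,Z)$ for $Y,Z\in TM$, where $h(Y,Z)$ is tangent to $\overline{M}$ (normal to $M$ in $\overline{M}$) while $h_\iota(Y,Z)$ is normal to $\overline{M}$ in $\R^N$. These two pieces being orthogonal, taking the trace over $TM$ and the squared norm yields
\begin{equation*}
|\tilde H|^2=|H|^2+|\operatorname{tr}_{TM}h_\iota|^2.
\end{equation*}
The crucial geometric fact is that $|\operatorname{tr}_{TM}h_\iota|^2$ is a universal constant $d(m)$ depending only on $m=\dim M$ and on which rank-one symmetric space $\overline{M}$ is, and not on $X$ or on the particular $m$-plane $TM\subset T\overline{M}$. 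For the sphere this is immediate: $h_\iota(Y,Z)=-\langle Y,Z\rangle N$, so $\operatorname{tr}_{TM}h_\iota=-mN$ and $d(m)=m^{2}$. For the projective spaces one uses that $\iota$ is equivariant under the full isometry group of $\overline{M}$, which acts transitively on $m$-planes, so $|\operatorname{tr}_{TM}h_\iota|^2$ is automatically constant; its values $2m(m+1)$, $2m(m+2)$, $2m(m+4)$ are then read off at a single base point from the matrix formula for $\iota$. This is precisely the content of Lemma 3.1 of \cite{Soufi.Harl.Ilias}, also used in \cite{IlMa}, so no new computation is needed.

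Combining these ingredients, Corollary \ref{cor theorem 1} applied to $\tilde X$ gives
\begin{equation*}
\sum_{l=1}^m\lambda_{j+l}\le 4\left[\Big(1+\frac{m}{4}\Big)\lambda_j+\int_M\Big(\tfrac{1}{4}|\tilde H|^2-q\Big)u_j^2\right],
\end{equation*}
and the substitution $|\tilde H|^2=|H|^2+d(m)$ yields the first inequality of the corollary. The second, uniform, inequality follows by bounding the integrand by its supremum and using $\|u_j\|_{L^2}=1$.

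The only substantive obstacle is the verification of the constants $d(m)$ for $\mathbb{R}P^n$, $\mathbb{C}P^n$, $\mathbb{Q}P^n$: once one has the explicit Hermitian-projector description of $\iota$ and the identity for $|\operatorname{tr}_{TM}h_\iota|^2$, the remainder of the argument is a direct application of the already-established Euclidean result. Since this trace computation is carried out in the cited references, the proof here reduces to invoking it and chaining the two inequalities above.
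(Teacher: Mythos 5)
Your proposal is correct and follows essentially the same route as the paper: the paper's own derivation is precisely to compose $X$ with the standard first-eigenfunction embeddings of the rank-one symmetric spaces, invoke Lemma 3.1 of \cite{Soufi.Harl.Ilias} (as used in \cite{IlMa}) for the identity $|\tilde H|^2=|H|^2+d(m)$, and then apply Corollary \ref{cor theorem 1}. Your write-up simply fills in more of the geometric detail (the orthogonal splitting of second fundamental forms and the sphere computation) than the paper's one-line reduction.
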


\begin{remark} \label{remRSS}
\begin{enumerate}
\item If we apply this Corollary to a bounded domain of $\mathbb{S}^{n}$ or $\mathbb{C}P^{n}$ and to complex Submanifolds of $\mathbb{C}P^{n}$, then we obtain Theorem 1.1, Theorem 1.2 and Theorem 1.3 of Sun, Cheng and Yang \cite{ChengYang3}. 
\item When $M$ is without boundary and $q=0$, this gives as in corollary \ref{cor Reilly general}, the following generalized Reilly's inequality for compact Submanifolds of compact rank one symmetric spaces (with the exception of the Cayley projective space),
\begin{equation*}
 \displaystyle \sum_{k=1}^m \lambda_{k+1}\leq \frac{1}{Vol(M)}\int_{M}\left(|H|^2+d(m)\right).
\end{equation*}
\end{enumerate}
\end{remark}

\section{Generalization of the Levitin-Parnovski inequality to the Kohn Laplacian on the Heisenberg group}
We first recall that the $2n+1$-dimensional Heisenberg group $\mathbb{H}^{n}$ is
the space $\mathbb{R}^{2n+1}$ equipped with the non-commutative group law
$$ (x,y,t)(x',y',t')=\left(x+x',y+y',t+t'+\frac{1}{2}\right)
(\left\langle x',y\right\rangle_{\mathbb{R}^{n}}-\left\langle x,y'\right\rangle_{\mathbb{R}^{n}}),$$
where $x,x',y,y'\in \mathbb{R}^{n},\; t \;\rm{and}\; t' \in \mathbb{R}$.
The following vector fields 
$$\left\{ T=\frac{\partial}{\partial t},\ X_{i}=\frac{\partial}{\partial x_{i}}+\frac{y_{i}}{2}\frac{\partial}{\partial t},\ Y_{i}=\frac{\partial}{\partial y_{i}}-\frac{x_{i}}{2}\frac{\partial}{\partial t}\ ; \ {i \leq n}\right\}$$
form a basis of the Lie algebra of $\mathbb{H}^{n}$, denoted by $\mathcal{H}^{n}$. We notice that the only non--trivial commutators are $\left[X_{i},Y_{j}\right]= - T \delta_{ij},\; i,j=1, \cdots ,n$.
Let $\Delta_{\mathbb{H}^{n}}$ denote the real Kohn Laplacian (or the sublaplacian associated with the basis 
$\left\{X_{1},\cdots,X_{n},Y_{1},\cdots,Y_{n}\right\}$): 
\begin{align*}
 \Delta_{\mathbb{H}^{n}}&= \sum_{i=1}^{n} X_{i}^{2}+Y_{i}^{2}\\
&= \Delta^{\mathbb{R}^{2n}}_{xy}+\frac{1}{4}(|x|^{2}+|y|^{2})\frac{\partial^{2}}{{\partial t}^{2}}+ \frac{\partial}{\partial t}\sum_{i=1}^{n}\left(y_{i}\frac{\partial}{\partial x_{i}}-x_{i}\frac{\partial}{\partial y_{i}}\right).\end{align*}
We consider the following eigenvalue problem :
\[\begin{cases}
-\Delta_{\mathbb{H}^{n}} {u} = \lambda {u}\,\,\,\, \hbox{in}\ \Omega \\
{u}=0 \,\,\,\ \ \hbox{on} \ \partial\Omega,
\end{cases}\]

\par\noindent
where $\Omega$ is a bounded domain of the Heisenberg group $\mathbb{H}^{n}$, with smooth boundary.
It is known that the Dirichlet problem (5.1) has a discrete spectrum.  
In what follows, we let
$$ 0 < \lambda_{1} \leq \lambda_{2} \leq \cdots \leq \lambda_{k} \cdots  \rightarrow +\infty, $$
denote its eigenvalues and orthonormalize its eigenfunctions $u_{1},\, u_{2},\,\cdots \, \in S^{1,2}_{0}(\Omega)$
so that, $\forall i,j\ge 1$,
$$\left\langle u_{i},u_{j}\right\rangle_{L^{2}}= \int_{\Omega} u_{i} u_{j} dx \, dy \, dt= \delta_{ij}. $$
Here, $S^{1,2}(\Omega)$ denotes the Hilbert space of the functions $u \in L^{2}(\Omega)$ such that $X_{i}(u),\, Y_{i}(u) \in L^{2}(\Omega)$, and $S^{1,2}_{0}$
denotes the closure of $\mathcal{C}^{\infty}_{0}(\Omega)$ with respect to the Sobolev norm
$$ \|u\|^{2}_{S^{1,2}}= \int_{\Omega} \Big(|\nabla_{\mathbb{H}^{n}}u|^{2}+ |u|^{2}\Big) dx\,dy\,dt,$$
with $\nabla_{\mathbb{H}^{n}}u= (X_{1}(u),\cdots, X_{n}(u),Y_{1}(u),\cdots,Y_{n}(u)).$\\

The main result of this paragraph is the following 
\begin{theorem}
 For any $j \ge 1$,
\begin{equation}\label{Lev-Parn}
 \sum_{l=1}^n \lambda_{j+l} \leq (n+2) \lambda_j.
\end{equation}
\end{theorem}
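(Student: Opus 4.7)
I would run the Levitin--Parnovski argument of Theorem \ref{theorem 1} in this sub-Riemannian setting, with the horizontal Euclidean coordinates $x_{1},\dots,x_{n},y_{1},\dots,y_{n}$ of $\mathbb{H}^{n}$ playing the role of the components of an isometric immersion. That is, I would apply Lemma \ref{levitin} with $L=-\Delta_{\mathbb{H}^{n}}$ (the Dirichlet realization on $\Omega$, whose spectrum is precisely the sequence $\{\lambda_{k}\}$) and let $G$ run in turn over the $2n$ corresponding multiplication operators; the vertical coordinate $t$ is not a useful choice because $[[L,t],t]$ is not a scalar.

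The heart of the matter is the commutator computation. Applying $[A^{2},M_{f}]=M_{A^{2}f}+2M_{Af}A$ (valid for any vector field $A$) together with $X_{i}(x_{j})=\delta_{ij}$, $Y_{i}(x_{j})=0$, $X_{i}(y_{j})=0$, $Y_{i}(y_{j})=\delta_{ij}$ and the vanishing of $X_{i}^{2}(x_{j}),X_{i}^{2}(y_{j}),Y_{i}^{2}(x_{j}),Y_{i}^{2}(y_{j})$, I expect to find
\begin{equation*}
[-\Delta_{\mathbb{H}^{n}},x_{j}]=-2X_{j},\qquad [-\Delta_{\mathbb{H}^{n}},y_{j}]=-2Y_{j}.
\end{equation*}
Hence $[[L,G],G]=-2\,\mathrm{Id}$ and $|\nabla_{\mathbb{H}^{n}}G|^{2}\equiv 1$ for each of the $2n$ test operators, and summing Parseval's identity over them yields
\begin{equation*}
\sum_{G}\|[L,G]u_{j}\|_{L^{2}}^{2}=4\int_{\Omega}\sum_{i=1}^{n}\bigl(|X_{i}u_{j}|^{2}+|Y_{i}u_{j}|^{2}\bigr)=4\lambda_{j},
\end{equation*}
after integration by parts against the Dirichlet eigenfunction $u_{j}$.

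I would then glue these ingredients together exactly as in the proof of Theorem \ref{theorem 1}: Gram--Schmidt the $2n\times 2n$ matrix $\omega_{k,l}=\langle[L,G_{l}]u_{j},u_{j+k}\rangle_{L^{2}}$ into lower-triangular form (this is simply an orthogonal rotation in the $2n$-dimensional horizontal coordinate space, which preserves both $|\nabla_{\mathbb{H}^{n}}G|^{2}\equiv 1$ and the double-commutator structure); drop the nonpositive $k<j$ terms and the vanishing $j<k<j+l$ terms in the Levitin--Parnovski identity; extend the remaining tail to all $k$; and sum over $l=1,\dots,2n$. This should deliver $\sum_{l=1}^{2n}(\lambda_{j+l}-\lambda_{j})\le 4\lambda_{j}$, that is, $\sum_{l=1}^{2n}\lambda_{j+l}\le(2n+4)\lambda_{j}$. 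The announced inequality is then immediate from the monotonicity $\lambda_{j+1}\le\dots\le\lambda_{j+2n}$: the first $n$ of these values are majorized by the average, so
\begin{equation*}
\sum_{l=1}^{n}\lambda_{j+l}\le\tfrac{1}{2}\sum_{l=1}^{2n}\lambda_{j+l}\le(n+2)\lambda_{j}.
\end{equation*}

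The only nonalgebraic point requiring care is the verification of the hypothesis $G(D_{L})\subseteq D_{L}\subseteq D_{G}$ in Lemma \ref{levitin}, which here reduces to checking that multiplication by the restrictions of $x_{i},y_{i}$ to $\overline{\Omega}$ preserves both $S^{1,2}_{0}(\Omega)$ and the $L^{2}$-domain of the Dirichlet Kohn Laplacian. I expect this to be the only real technical (as opposed to conceptual) obstacle, because $\Delta_{\mathbb{H}^{n}}$ is merely hypoelliptic rather than elliptic; however, the Leibniz rule for the horizontal gradient together with H\"ormander's sub-elliptic regularity should suffice. Once this is in place, the remainder of the proof is purely algebraic and formally identical to the Euclidean argument of Theorem \ref{theorem 1}.
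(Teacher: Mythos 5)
Your proof is correct, and it rests on the same toolkit as the paper's (Lemma \ref{levitin} applied to the horizontal coordinates, the commutators $[L,x_l]=-2X_l$, $[L,y_l]=-2Y_l$, $[[L,G],G]=-2$, and the Parseval sum $4\int_\Omega|\nabla_{\mathbb{H}^n}u_j|^2=4\lambda_j$, which agree with the paper's (\ref{z})--(\ref{c'})), but the bookkeeping is genuinely different. The paper treats $x_l$ and $y_l$ as two separate $n$-member families and assigns \emph{both} of them the same spectral gap $\lambda_{j+l}-\lambda_j$ for $l=1,\dots,n$ (inequalities (\ref{u}) and (\ref{v})); adding the two families gives $2\sum_{l=1}^{n}(\lambda_{j+l}-\lambda_j)\le 4\lambda_j$, which is exactly (\ref{Lev-Parn}). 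You instead merge all $2n$ horizontal coordinates into one family, triangularize a single $2n\times 2n$ matrix, and assign the $2n$ distinct gaps $\lambda_{j+l}-\lambda_j$, $l=1,\dots,2n$, arriving at the stronger intermediate inequality $\sum_{l=1}^{2n}\lambda_{j+l}\le(2n+4)\lambda_j$, from which the stated bound follows by monotonicity. Your justification of the rotation is the right one: a general element of $O(2n)$ mixing the $x$'s and $y$'s does not preserve the Heisenberg group law, but it does preserve $Z_m(G_{m'})=\delta_{mm'}$, hence the constancy of $|\nabla_{\mathbb{H}^n}G|^2$, the identity $[[L,G'_l],G'_l]=-2$, and the total Parseval sum, which is all the argument uses (the paper's version only needs two independent $n\times n$ rotations, one among the $x$'s and one among the $y$'s). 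The net effect is that your route proves a strictly stronger statement --- it also controls $\lambda_{j+n+1},\dots,\lambda_{j+2n}$ --- at no extra cost, while the paper's pairing is the shorter path to the inequality as stated. Your closing remark about verifying the domain hypothesis $G(D_L)\subseteq D_L\subseteq D_G$ of Lemma \ref{levitin} flags a point the paper leaves entirely implicit.
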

\begin{proof}
Inequality (\ref{Lev-Parn}) follows by applying Lemma \ref{levitin}, with $L=-\Delta_{\mathbb{H}^n}$ and $G=x_l$ or $G=y_l$, $l=1,\ldots,n$.\\
We obtain, as in the proof of Theorem \ref{theorem 1},
\begin{equation}\label{u}
-\frac{1}{2} \langle [[L,x_l],x_l]u_j,u_j \rangle_{L^2} \leq \frac{1}{\lambda_{j+l}-\lambda_j}\|[L,x_l]u_j\|_{L^2}^2
\end{equation}
and \begin{equation}\label{v}
-\frac{1}{2} \langle [[L,y_l],y_l]u_j,u_j \rangle_{L^2} \leq \frac{1}{\lambda_{j+l}-\lambda_j}\|[L,y_l]u_j\|_{L^2}^2.
    \end{equation}
Taking the sum of (\ref{u}) and (\ref{v}) and summing on $l$ from $1$ to $n$ gives
\begin{align}
{} -\frac{1}{2} \sum_{l=1}^n (\lambda_{j+l}-\lambda_j) & \langle [[L,x_l],x_l]u_j,u_j \rangle_{L^2} -\frac{1}{2}\sum_{l=1}^n (\lambda_{j+l}-\lambda_j)\langle[[L,y_l],y_l]u_j,u_j \rangle_{L^2} \nonumber \\
 {} & \leq \sum_{l=1}^n \|[L,x_l]u_j\|_{L^2}^2+\sum_{l=1}^n \|[L,y_l]u_j\|_{L^2}^2. \label{y}
\end{align}
By a straightforward calculation, we obtain 
$$[L,x_l]u_j=-2 X_l(u_j) \quad {\rm and}\quad [L,y_l]u_j=-2 Y_l(u_j).$$
Hence
\begin{equation}\label{z}
\sum_{l=1}^n \|[L,x_l]u_j\|_{L^2}^2+\sum_{l=1}^n \|[L,y_l]u_j\|_{L^2}^2=4 \int_{\Omega}|\nabla_{\mathbb{H}^n}u_j|^2=4\lambda_j.
\end{equation}
Now
\begin{equation}\label{a'}
[[L,x_l],x_l]u_j=-2[X_l,x_l]u_j=-2u_j
\end{equation}
and
\begin{equation}
[[L,y_l],y_l]u_j=-2[Y_l,y_l]u_j=-2u_j.
\end{equation}
Thus
\begin{equation}\label{b'}
 \langle [[L,x_l],x_l]u_j,u_j \rangle_{L^2}=-2 \int_{\Omega}u_j^2=-2
\end{equation}
and
\begin{equation}\label{c'}
 \langle [[L,y_l],y_l]u_j,u_j \rangle_{L^2}=-2 \int_{\Omega}u_j^2=-2.
\end{equation}
Finally, putting identities (\ref{z}), (\ref{b'}) and (\ref{c'}) in (\ref{y}), we obtain inequality (\ref{Lev-Parn}). 
\end{proof}
\subsection*{Acknowledgments}

This work was partially supported by the ANR(Agence Nationale de la Recherche) through FOG project(ANR-07-BLAN-0251-01).  We also wish to
thank the referee for his valuable suggestions which helped us in improving the first presentation of this article.

\end{document}